
\documentclass[12pt,reqno]{amsart}
\setcounter{tocdepth}{1}
\usepackage{etex}
 \setlength{\parskip}{3pt}
\usepackage{amsmath,amsthm,amsfonts,amssymb,amscd,flafter,pinlabel, booktabs}
\usepackage[mathscr]{eucal}
\usepackage{graphics}
 \usepackage[all,knot,arc]{xy}

\usepackage[usenames,dvipsnames]{color}
\usepackage{subfigure}
\usepackage{graphicx}
 \usepackage{tabu}
 \usepackage{epstopdf}
 \usepackage[T1]{fontenc}
 \usepackage{scalefnt}

\usepackage{bbm}
\usepackage{mathtools}

\usepackage{pgf}

\usepackage{tikz}
\usetikzlibrary{arrows,automata}
\usetikzlibrary{matrix,arrows}
\tikzset{cdlabel/.style={above,sloped,%
    execute at begin node=$\scriptstyle,execute at end node=$}}
\tikzset{algarrow/.style={->, thick}}   
\tikzset{alb/.style={->, bend right=55, thick}}
\tikzset{arb/.style={->, bend left=25, thick}} 
\tikzset{al/.style={->, bend right=20, thick}}
\tikzset{ar/.style={->, bend left=20, thick}}
\tikzset{unst1/.style={->, dashed, bend right=30, thick}}
\tikzset{unst2/.style={->, dashed, bend left=30, thick}}
\tikzset{als/.style={->, bend right=15, thick}}
\tikzset{ars/.style={->, bend left=15, thick}}
\tikzset{blgarrow/.style={->, thick}}
\tikzset{clgarrow/.style={->, thick}}
\tikzset{tensoralgarrow/.style={double, double equal sign distance, -implies}}
\tikzset{tensorblgarrow/.style={double, double equal sign distance, -implies}}
\tikzset{tensorclgarrow/.style={double, double equal sign distance, -implies}}
\tikzset{tensorelgarrow/.style={double, double equal sign distance, -implies}}
\tikzset{modarrow/.style={->, dashed}}
\tikzset{Amodar/.style={->, dashed}}
\tikzset{Dmodar/.style={->, dashed}}
\tikzset{DAmodar/.style={->, dashed}}

\usepackage[latin1]{inputenc}

\usepackage[colorlinks=true,linkcolor=blue,citecolor=blue,urlcolor=blue]{hyperref}
\usepackage{xcolor}

\tikzset{cdlabel/.style={above,sloped,
    execute at begin node=$\scriptstyle,execute at end node=$}}    
\tikzset{al/.style={->, bend right=45, thick}}
\tikzset{ar/.style={->, bend left=45, thick}}

\normalfont\upshape


\usepackage[lmargin=1in,rmargin=1in,tmargin=1in,bmargin=1in]{geometry}


\newtheorem{theorem}{Theorem}

\theoremstyle{definition}
\newtheorem*{remark}{Remark}


\newcommand{\R}{\ensuremath{\mathbb{R}}}
\newcommand{\Z}{\ensuremath{\mathbb{Z}}}
















\begin{document}

\title{Khovanov homology and causality in spacetimes}

\author{Vladimir Chernov}
\address {Department of Mathematics, Dartmouth College\\ Hanover, NH 03755}
\email {vladimir.chernov@dartmouth.edu}
\urladdr{\href{http://math.dartmouth.edu/~chernov}{http://math.dartmouth.edu/~chernov}}
\author{Gage Martin}
\address {Department of Mathematics, Boston College\\ Chestnut Hill, MA 02467}
\email {martaic@bc.edu}
\author{Ina Petkova}
\address {Department of Mathematics, Dartmouth College\\ Hanover, NH 03755}
\email {ina.petkova@dartmouth.edu}
\urladdr{\href{http://math.dartmouth.edu/~ina}{http://math.dartmouth.edu/~ina}}

\keywords{Khovanov homology, causality, globally hyperbolic spacetime, Low Conjecture}
\subjclass[2010]{53C50 (Primary); 57M27, 83C75, 83C80 (Secondary)}


\begin{abstract}
We observe that Khovanov homology detects causality in  $(2+1)$-dimensional globally hyperbolic spacetimes whose Cauchy surface is homeomorphic to $\R^2$.
\end{abstract}

\maketitle

A spacetime $X$ is a time-oriented Lorentz manifold. Here a Lorentz metric is assumed to have signature $(+, \cdots, +, -)$. The spacetime is \emph{globally hyperbolic} if it has a Cauchy surface, i.e.~a subset $\Sigma$ such that each inextensible curve $\gamma$ with $\gamma'(t)\cdot \gamma'(t)\leq 0$ intersects it exactly once~\cite[pp.~211-212]{he}. The Cauchy surface can be taken to be smooth and spacelike, i.e.~such that the restriction of the Lorentz metric to it is Riemann. The globally hyperbolic spacetime is then diffeomorphic to $\Sigma \times \mathbb R$~\cite{bs3,bs2,bs1}.

Globally hyperbolic spacetimes are arguably the most important and studied class of spacetimes. One of the versions of the Strong Cosmic Censorship Conjecture of Penrose~\cite{penrose} is that all physically relevant spacetimes are like this. 

Two points (events) $x,y\in X$ are \emph{causally related} if there is a curve $\gamma$ connecting the two points with $\gamma'(t)\cdot \gamma'(t)\leq 0$, i.e.~if one can get from one point to the other without exceeding the light speed.   
The space $N_X$ of unparameterized  future-directed light rays in $X$ can be identified with the total space $ST^*\Sigma$ of the spherical cotangent bundle of  a Cauchy surface $\Sigma$. The set of lights rays through a point $x$ is then identified with the sphere $S_x\subset N_X$ called the \emph{sky of $x$}.

Assuming that the Cauchy surface $\Sigma$ is not homeomorphic  to $\R P^2$ or $S^2$, S.~Nemirovski and the first author proved the Low Conjecture~\cite{low-phd, low1, low2, LowLeg, LowNull}, which states that  events $x,y$ in a $(2+1)$-dimensional globally hyperbolic spacetime $X$ (with $\Sigma \neq S^2, \mathbb R P^2$) are causally related if and only if their skies $S_x, S_y$ are linked in $N_X$, see \cite{cn}. 
Here \emph{linked} means that  the pair is {\bf not} isotopic to a pair of fibers of the $S^1$-bundle $ST^*\Sigma\to \Sigma$ or that $S_x, S_y$ intersect, meaning that $x,y$ are on a common light ray. One can show that this does not depend on the choice of a Cauchy surface $\Sigma$ and hence on the identification $N_X=ST^*\Sigma.$

For the case where the Cauchy surface of $X$ is homeomorphic to $\mathbb R^2$, these skies are circles in the solid torus $N_X=S^1\times \mathbb R^2$, each isotopic to the longitude of the solid torus. 
In \cite{nat-tod} Nat\'ario and Tod used the Kauffman polynomial (which is related to the Jones polynomial by a variable change) of the pair of skies to study 
causality. They computed the polynomial for a big class of examples of pairs of skies of causally related events. The polynomial was nontrivial, thus showing the skies were indeed linked. This provided numerical evidence supporting the Low Conjecture, which at the time was unknown to be true. Completely detecting causality with this method is unlikely, however. There are infinitely many examples of pairs of linked longitudes in $S^1\times \R^2$ which become unlinked in $S^3\supset S^1\times \R^2$, see Figure~\ref{fig:link-in-t} for an example, so one would at least need to refine the computation to keep track of the solid torus. Further, it is unknown whether the Kauffman polynomial would detect the connected sum of two Hopf links in $\mathbb R^3$ (i.e.~ the trivial pair of longitudes in the solid torus).

Khovanov homology assigns to a link $L \subset S^3$ a bigraded $\Z/2$-module $\mathit{Kh}(L; \Z/2)$, whose Euler characteristic is the Jones polynomial. It is known that Khovanov homology carries strictly more information than the Jones polynomial. A variant of Khovanov homology called annular Khovanov homology assigns to an annular link $L \subseteq A \times [0,1]$ a triply graded $\Z/2$-module $\mathit{AKh}(L; \Z/2)$. Combining recent detection results about Khovanov homology and annular Khovanov homology~\cite{grigsby_sutured_2014,john_a_baldwin_categorified_2015,min-kh,xie_instanton_2019} with the results in~\cite{cn}, we get the following.

\begin{theorem}
Khovanov homology and annular Khovanov homology both detect causal relation for events in $(2+1)$-dimensional globally hyperbolic spacetimes with a Cauchy surface homeomorphic to $\mathbb R^2$.
\end{theorem}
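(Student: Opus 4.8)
The plan is to reduce the theorem to a topological detection problem about the sky link and then feed it to the cited detection theorems. By the resolution of the Low conjecture in \cite{cn}, events $x,y$ are causally related if and only if their skies $S_x,S_y$ are linked in $N_X=ST^*\R^2$; here $N_X$ is an open solid torus and each sky is a longitude. When $x$ and $y$ are not causally related, $S_x\cup S_y$ is by definition isotopic in $N_X$ to a pair of fibers of $ST^*\R^2\to\R^2$, and I record this reference configuration two ways: as the trivial annular link $U_2$ (two parallel longitudes, i.e.~two parallel copies of the core), and, via the standard inclusion of the solid torus into $S^3$, as the link $L_0\subset S^3$, the connected sum of two Hopf links. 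It then suffices to prove two detection statements: that $\mathit{AKh}(S_x\cup S_y;\Z/2)\cong\mathit{AKh}(U_2;\Z/2)$ forces $S_x\cup S_y$ to be annularly isotopic to $U_2$, and that $\kh(S_x\cup S_y;\Z/2)\cong\kh(L_0;\Z/2)$ forces $x$ and $y$ to be causally unrelated; the reverse implications are immediate. If $x,y$ lie on a common light ray the skies intersect rather than form a link and the relation is visible directly, so this degenerate case I would treat separately.

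For the annular statement the plan is to apply the detection of the trivial annular link---equivalently, of the trivial braid and of product sutured manifolds---established in \cite{john_a_baldwin_categorified_2015,xie_instanton_2019}, using the passage from annular Khovanov homology to sutured Floer homology of \cite{grigsby_sutured_2014}. Since each sky is a longitude, the complement of $S_x\cup S_y$ in the solid torus is a sutured manifold that is a product exactly when $S_x\cup S_y$ is annularly trivial; matching the rank of $\mathit{AKh}(S_x\cup S_y;\Z/2)$ to the minimal value $\mathit{AKh}(U_2;\Z/2)$ then forces the product structure, hence annular triviality. This is the cleaner direction, since annular Khovanov homology remembers the embedding into the solid torus and so sees precisely the linking by which \cite{cn} characterizes causality.

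For ordinary Khovanov homology the plan is to show that $\kh(-;\Z/2)$ detects $L_0$, the connected sum of two Hopf links. I would assemble this from the known Khovanov detection of the unlink and of the Hopf link, the connected-sum (Frobenius-algebra tensor) structure of Khovanov homology, and the detection input of \cite{min-kh}: concretely, $\kh(L_0)$ has the rigid form of a tensor product over $\Z/2[X]/(X^2)$ of two copies of the Hopf link invariant, which pins down the link type. Having detected $L_0$ in $S^3$, I would then invoke the topological fact that a pair of longitudes of $ST^*\R^2$ isotopic in $S^3$ to $L_0$ must be isotopic in the solid torus to the trivial pair, i.e.~causally unrelated.

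The hard part will be this last topological step. Passing to $S^3$ forgets the solid torus, and the $S^3$-isotopy type of a pair of longitudes does not in general determine its type in $ST^*\R^2$: there exist pairs linked in the solid torus that become unlinked in $S^3$. One must therefore rule out a causally related pair of skies becoming $S^3$-isotopic to $L_0$; equivalently, one must show that among pairs of longitudes only the causally unrelated ones are $S^3$-isotopic to the connected sum of two Hopf links. I expect this to follow from the constraints that \cite{cn} places on sky links, but verifying that the causality information survives the loss of the annular grading is the crux---and it is precisely this obstacle that the annular argument sidesteps by keeping track of the solid torus.
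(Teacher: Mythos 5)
Your annular argument is essentially the paper's: use the fact that annular Khovanov homology distinguishes braid closures from other annular links (\cite[Corollary~1.2]{grigsby_sutured_2014}, \cite[Corollary~1.4]{xie_instanton_2019}) and then that it distinguishes the trivial braid closure among braid closures (\cite[Theorem~3.1]{john_a_baldwin_categorified_2015}); that half is correct. The genuine gap is in the ordinary Khovanov half. You apply $\kh(-;\Z/2)$ to the bare two-component link $S_x \sqcup S_y \subset S^3$, and your reduction hinges on the ``topological fact'' that a pair of longitudes of the solid torus whose image in $S^3$ has the reference isotopy type must be trivial in the solid torus. This is false, and the paper says so explicitly: there are infinitely many pairs of longitudes that are linked in $S^1\times\R^2$ but become the two-component unlink in $S^3$ (the paper's Figure~1 shows one), so the $S^3$-isotopy type of $S_x\sqcup S_y$ simply does not determine causality, and no constraint from \cite{cn} on sky links can recover the annular information once it has been discarded. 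You also misidentify the reference link along the way: the image in $S^3$ of a trivial pair of fibers is the two-component \emph{unlink}, not the connected sum of two Hopf links --- the latter is a three-component link, and it arises only after one adjoins a third component.

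That third component is exactly the paper's fix and the step your proposal omits: the paper records the embedding by replacing $(V, L)$ with $(S^3, L')$, where $L' = S_x \sqcup S_y \sqcup \mu$ and $\mu$ is the meridian of $V$. Under this dictionary the trivial pair of fibers corresponds to $P_3$, the connected sum of two Hopf links, and the detection input is \cite[Corollary~1.3]{min-kh}: $\kh(-;\Z/2)$ detects $P_3$ among all \emph{three}-component links. (Your proposed substitute --- assembling detection of $P_3$ from unlink detection, Hopf link detection, and the Frobenius tensor structure of Khovanov homology under connected sum --- would not work either: an abstract isomorphism $\kh(L)\cong\kh(P_3)$ carries no a priori connected-sum decomposition of $L$, which is why \cite{min-kh} is needed as a black box.) With $\mu$ included, an $S^3$-isotopy identifying $L'$ with $P_3$ must, by linking numbers with the two longitudes, carry $\mu$ to the middle component of $P_3$, and restricting to the complement of $\mu$ returns the triviality of $S_x\sqcup S_y$ in $V$; the ``hard last step'' you flagged as the crux then disappears, because the meridian plays in the Khovanov setting the same embedding-remembering role that the annular grading plays in the annular setting.
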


\begin{proof}
If the skies $S_x, S_y$ intersect, then the two events $x,y$ belong to a common light ray, and hence are causally related. So below we assume that $S_x\cap S_y=\emptyset$.

Let $V= S^1\times \R^2$ be a solid torus and let $\lambda = S^1\times \{p\}$ be a longitude of $V$. 

First we show how existing results imply the theorem for annular Khovanov homology. Since $V=S^1\times \mathbb R^2=(S^1\times \mathbb R)\times \mathbb R$, we can regard $V$ as the thickened annulus $A \times I=A\times \mathbb R$.  Let $L= K_1\sqcup K_2$ be a $2$-component link in  $V$, such that each link component $K_i$ is isotopic to $\lambda$.  Determining whether $L$ is unlinked (isotopic to a pair of fibers) in $V$ is equivalent to determining whether $L$ is the (closure of the)  trivial two-braid in $A \times I$.

By~\cite[Corollary~1.2]{grigsby_sutured_2014} and~\cite[Corollary~1.4]{xie_instanton_2019}, annular Khovanov homology is known to distinguish  braids in $A \times I$ from other annular links. Additionally, by~\cite[Theorem~3.1]{john_a_baldwin_categorified_2015}, annular Khovanov homology is known to distinguish the trivial braid closure from other braid closures. In particular, $x,y\in X$ are causally unrelated if and only if $\mathit{AKh}(S_x \sqcup S_y; \Z/2)\cong \mathit{AKh}(U_2; \Z/2)$. Here $U_2$ is the closure of the trivial two-braid in $A \times I$.

Now we address Khovanov homology.  Let $L$ be an $n$-component link in $V$. Using the standard embedding of  $V$ into $S^3$ (so that $\lambda$ bounds a disk disjoint from some other $S^1$ fiber of $V$), we can equivalently think of the pair $(V, L)$ as the pair $(S^3, L')$, where $L'$ is the $(n+1)$-component link consisting of $L$, together with the meridian $\mu$ of $V$. See Figure~\ref{fig:link-in-t}. 

 Let $L= K_1\sqcup K_2$ be a $2$-component link in  $V$, such that each link component $K_i$ is isotopic to $\lambda$.  Determining whether $L$ is unlinked (not isotopic to a pair of fibers) in $V$ is equivalent to determining whether $L'$ is the connected sum of two Hopf links.  

\begin{figure}[h]
  \centering
  \labellist
  \pinlabel  {\textcolor{red}{$\mu$}} at -10 160
    \pinlabel  {\textcolor{blue}{$\lambda$}} at 9 71
  \endlabellist
  \includegraphics[scale=.45]{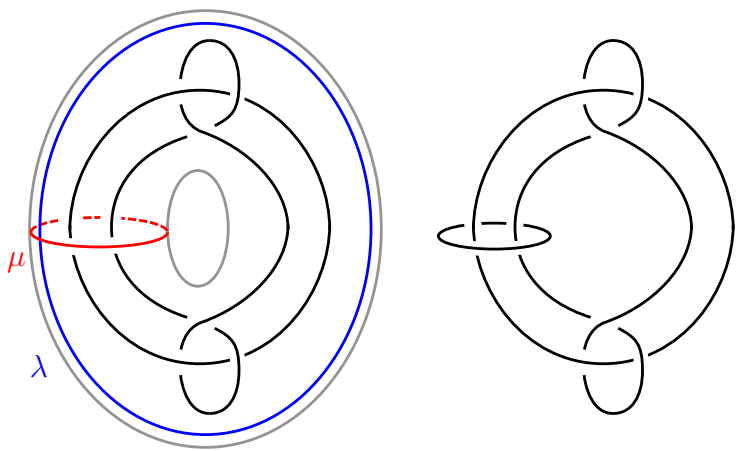}
  \caption{Left: a $2$-component link in the solid torus $V = S^1\times \R^2$. The fiber longitude $\lambda$ is drawn in blue, and the meridian $\mu$ is in red. Right: The corresponding $3$-component link $L'$.}
  \label{fig:link-in-t}
\end{figure}
 By \cite[Corollary~1.3]{min-kh}, Khovanov homology detects the connected sum of two Hopf links among all $3$-component links. In particular, $x,y\in X$ are causally unrelated if and only if $\mathit{Kh}(S_x\sqcup S_y\sqcup \mu; \Z/2)\cong \mathit{Kh}(P_3; \Z/2)$. Here $P_3$ is the standard connected sum of two Hopf links. 
\end{proof}

\begin{remark}
The universal cover $\widetilde X$  of a globally hyperbolic $(2+1)$-dimensional spacetime $X$ with a Cauchy surface $\Sigma$ is a globally hyperbolic spacetime whose Cauchy surface $\widetilde \Sigma$ is the universal cover of $\Sigma$~\cite{ChernovRudyak}. If $x,y\in X$ are two causally related points, then there is a path $\gamma$ with $\gamma'(t)\cdot \gamma'(t)\leq 0$ connecting them. Hence, its lift $\tilde \gamma$ connects some lifts $\tilde x, \tilde y$ of $x,y$ to $\widetilde X$ which are then causally related in $\widetilde X$. (Of course, if any two points in $\widetilde X$ are causally related, then their projections to $X$ are also causally related.)

When the universal covering of $\Sigma$ is not $S^2$, it has to be homeomorphic to $\mathbb R^2$, and we can apply the covering construction from above to show that Khovanov homology detects causality for all globally $(2+1)$-dimensional spacetimes whose Cauchy surface is not homeomorphic to $S^2$ or $\mathbb R P ^2$. When $\Sigma$ itself is not homeomorphic to $\R^2$ however, this would  require one to check that infinitely many pairs of possible lifts of skies are unlinked, or vice versa that there is a pair of lifts whose skies are linked.
\end{remark}

\begin{remark}
For higher dimensional globally hyperbolic spacetimes, causality can be interpreted as a Legendrian linking of a pair of skies, see for example~\cite{cn, cn2, cn3, c}. However, to detect the Legendrian unlink X
one would have to develop and use techniques other than Khovanov homology, which is 
$3$-dimensional and topological (rather than contact) in nature.
\end{remark}

\begin{remark}
Similar arguments to those contained in~\cite{grigsby_sutured_2014} show that link Floer homology detects braids. This braid detection result is known to some experts but is missing from the literature. A version of the argument  was communicated to the second author by John Baldwin~\cite{BaldwinPersonalCommunication}. A proof will appear in an upcoming paper by the second author.  
Combined with a result from~\cite{john_a_baldwin_categorified_2015} that link Floer homology detects the trivial braid, this implies that link Floer homology also detects causality in this setting.
\end{remark}

\subsection*{Acknowledgments}
We thank Robert Low for a valuable discussion during his recent visit to Dartmouth College. This work was partially supported by a grant from the Simons Foundation
(\# 513272 to Vladimir Chernov). Ina Petkova received support from NSF Grant DMS-1711100.

\bibliographystyle{alpha}

\bibliography{master}

\end{document}